\newtheorem{theorem}{Theorem}
\newtheorem{lemma}{Lemma}
\newtheorem{remark}{Remark}
\newcommand{\bfeta}{{\boldsymbol{\eta}}}
\NewDocumentCommand{\PR}{o o} {
    \IfNoValueTF{#2} {
        \IfNoValueTF{#1} {
            \mathbf{P}
        } {
            \mathbf{P} \left( #1 \right) 
        }
    } {
        \mathbf{P} \left( \left. #1 \right| #2 \right)
    }
}
\NewDocumentCommand{\PRB}{o o} {
    \IfNoValueTF{#2} {
        \IfNoValueTF{#1} {
            \mathbf{P}_{\bfeta}
        } {
            \mathbf{P}_{\bfeta} \left( #1 \right) 
        }
    } {
        \mathbf{P}_{\bfeta} \left( \left. #1 \right| #2 \right)
    }
}
\NewDocumentCommand{\ER}{o o} {
    \IfNoValueTF{#2} {
        \IfNoValueTF{#1} {
            \mathbf{E}
        } {
            \mathbf{E} \left( #1 \right)
        }
    } {
        \mathbf{E} \left( \left. #1 \right| #2 \right)
    }
}
\NewDocumentCommand{\ERB}{o o} {
    \IfNoValueTF{#2} {
        \IfNoValueTF{#1} {
            \mathbf{E}_{\bfeta}
        } {
            \mathbf{E}_{\bfeta} \left( #1 \right)
        }
    } {
        \mathbf{E}_{\bfeta} \left( \left. #1 \right| #2 \right)
    }
}
\NewDocumentCommand{\DR}{o o} {
    \IfNoValueTF{#2} {
        \IfNoValueTF{#1} {
            \mathrm{Var}
        } {
            \mathrm{Var} \left( #1 \right)
        }
    } {
        \mathrm{Var} \left( \left. #1 \right| #2 \right)
    }
}
\NewDocumentCommand{\DRB}{o o} {
    \IfNoValueTF{#2} {
        \IfNoValueTF{#1} {
            \mathrm{Var}_{\bfeta}
        } {
            \mathrm{Var}_{\bfeta} \left( #1 \right)
        }
    } {
        \mathrm{Var}_{\bfeta} \left( \left. #1 \right| #2 \right)
    }
}
\DeclarePairedDelimiter\abs{\lvert}{\rvert}
\title[
    On Asymptotic Behavior of Extinction Moment
    of Critical BBPRE
]{
    On Asymptotic Behavior of Extinction Moment
    of Critical Bisexual Branching Process
    in Random Environment
}
\author{Zhiyanov A.P.}
\author{Shklyaev A.V.}
\address[Z, S]{Faculty of Mechanics and Mathematics, Lomonosov Moscow State University, Moscow, 119991, Russia}
\email[Z, S]{zhiyanovap@gmail.com, alexander.shklyaev@math.msu.ru}
\date{12.11.24}
\begin{document}
\maketitle
\begin{abstract}
We consider a critical bisexual branching process in a random environment generated by independent and identically distributed random variables.
Assuming that the process starts with a large number of pairs $N$, we prove that its extinction time is of the order $\ln^2 N$.
Interestingly, this result is valid for a general class of mating functions.
Among them are the functions describing the monogamous and polygamous behavior of couples, as well as the function reducing the bisexual branching process to the simple one.

\smallskip
\noindent \textbf{Keywords:} limit theorem, extinction time, critical bisexual branching process, random environment, large initial number of particles.
\end{abstract}

\section{Introduction}
Let $\bfeta = \{\eta_1, \eta_2, \ldots\}$ be a sequence of independent and identically distributed (i.i.d.) random variables (r.v.), where the term $\eta_n \in \mathbb{R}$ denotes the random environment at moment $n$.
Also, consider the family $\{ f_{\alpha}(s, t),\ \alpha \in \mathbb{R} \}$ of probability generating functions, and introduce the mating function $L: \mathbb{N}_{0} \times \mathbb{N}_{0} \times \mathbb{R} \rightarrow \mathbb{N}_{0}$, where $\mathbb{N}_0 = \mathbb{N} \cup \{ 0 \}$.

Under this notation, we define a bisexual branching process in a random environment (BBPRE) $\{ N_n,\ n \geq 0 \}$ as a time-homogeneous Markov chain with transition probabilities depending on $\bfeta$ by the rule:
\[
    \PR[ N_{n + 1} = j\, ][ N_n = i, \bfeta ] = \PR[ L\left( F_{n + 1}, M_{n + 1}, \eta_{n + 1} \right) = j\, ][ \eta_{n + 1} ],
\]
where $N_0 = N$, and the distribution of the random pair $(F_{n + 1}, M_{n + 1})$ conditioned on $\eta_{n + 1}$ has the probability generating function $f_{\eta_{n + 1}}(s, t)^i,\ s, t \in [0, 1]$.

More intuitively, the process $\{ N_n,\ n > 0 \}$ can be defined as follows.
Let us consider a sequence $\{ (F_{i, j}, M_{i, j}),\ i, j > 0 \}$ of two-dimensional random vectors taking values in $\mathbb{N}_0 \times \mathbb{N}_0$.
For a fixed environment at moment $n$, we assume the pairs $\{ (F_{n, j}, M_{n, j}),\ j > 0 \}$ to be conditionally independent and identically distributed with the corresponding probability generating function $f_{\eta_n}(s, t)$.
Also, the vectors $\{ (F_{n, j}, M_{n, j}),\ j > 0 \}$ are assumed to be independent for different $n \in \mathbb{N}$.
Under this notation, we define a BBPRE as the sequence
\begin{equation}
    N_0 = N,\ N_{n + 1} = L\left( \sum_{j = 1}^{N_n} F_{n + 1, j}, \sum_{j = 1}^{N_n} M_{n + 1, j}, \eta_{n + 1} \right),\ n \geq 0.
    \label{eq:bbpre-definition}
\end{equation}

This definition allows us to consider this process step by step using a biological analogy.
For a fixed moment $n$, $N_n$ couples start to produce offspring.
The $j$th couple of $n$th generation produces $F_{n + 1, j}$ ``female'' and $M_{n + 1, j}$ ``male'' descendants, respectively.
Then, all new ``female'' and ``male'' descendants group in 
\[
    N_{n + 1} = L\left(  \sum_{j = 1}^{N_n} F_{n + 1, j}, \sum_{j = 1}^{N_n} M_{n + 1, j}, \eta_{n + 1} \right)
\]
couples, which in turn, produce offspring.
Under this interpretation, the standard mating functions
\begin{equation}
    L(x, y, z) = x \min(1, y);\ \  
    L(x, y, z) = \min(x, y\, d(z)),\ d: \mathbb{R} \rightarrow \mathbb{N}.
    \label{eq:mating-standard}
\end{equation}
describe polygamous and monogamous ($d(z) = 1$) mating schemes, respectively.

Bisexual branching processes are considered to be a natural modification of a standard Galton-Watson branching process.
They were first introduced in~\cite{Daley1968} for the standard mating functions~\eqref{eq:mating-standard}.

In that study, the sufficient conditions for process extinction with probability one were provided.
Further advances in the extinction problem were made in~\cite{Hull1982, Bruss1984} for \emph{superadditive} mating functions.

Bisexual branching processes in a random environment were introduced in~\cite{Ma2006},
where non-degeneracy conditions of such processes were provided.
In~\cite{Ma2009}, the previous results were generalized to the case where the mating function depends on the environment.
Finally, in~\cite{Xiao2023}, an almost sure (a.s.) convergence rate of a normalized \emph{supercritical} BBPRE was studied under the condition
\[
    \prod_{n = 1}^{\infty} \ER[ \frac{ \sup_{ k > 0} k^{-1} \ERB[ N_{n + 1} ][ N_n = k ] }{ \inf_{ k > 0 } \, k^{-1} \ERB[ N_{n + 1} ][ N_n = k ] } ] < \infty.
\]
Unfortunately, it is too strict for the case of i.i.d. environments, since it implies that
$ 
    \ERB[ N_1 ][ N_0 = k] 
$
does not depend on $k$.
We consider much weaker condition that the mating function can be approximated by a Lipschitz function.

Note that BBPRE can be considered a natural generalization of controlled branching processes.
Indeed, let 
$
    L(x, y, z) = h(x, z),
$
where $h: \mathbb{N}_0 \times \mathbb{R} \rightarrow \mathbb{N}_0$ is a measurable function.
Then, the corresponding BBPRE reduces to a controlled process.
For a comprehensive review of controlled branching process results, see~\cite{Velasco2017}.

Denote by $\mathbb{R}_{\geq 0} = \{x \in \mathbb{R}\ |\ x \geq 0 \}$, and let $\{ N_n,\ n \in \mathbb{N}_0 \}$ be a BBPRE starting with $N_0 = N$ pairs.
Also, we will use the notation $\PRB[ \cdot ]$ and $\ERB[ \cdot ]$ instead of the conditional probability $\PR[ \cdot ][ \bfeta ]$ and expectation $\ER[ \cdot ][ \bfeta ]$, respectively.
We require the mating function $L(x, y, z)$ to satisfy the following conditions:
\begin{enumerate}[label=C\arabic*), ref=C\arabic*]
    \item \label{cond:mating-superadditive}
    $L(x, y, z)$ is \emph{superadditive}, i.e., for any $x, y, u, v \in \mathbb{N}_0$ and $z \in \mathbb{R}$,
    \[
        L(x + u, y + v, z) \geq L(x, y, z) + L(u, v, z);
    \]
    \item \label{cond:mating-apprx-lipsh}
    there is a function 
    $
        g(x, y, z): \mathbb{R}_{\geq 0} \times \mathbb{R}_{\geq 0} \times \mathbb{R} \rightarrow \mathbb{R}_{\geq 0},
    $
    that is Lipschitz with respect to $(x, y)$, i.e., there is a measurable function 
    $
        \lambda(z): \mathbb{R} \rightarrow \mathbb{R}_{> 0}
    $
    such that for any $x, y, u, v \in \mathbb{R}_{\geq 0}$ and $z \in \mathbb{R}$,
    \[
        \abs*{g(x, y, z) - g(u, v, z)} \leq \lambda(z) \left( \abs*{x - u} + \abs*{y - v} \right);
    \]
    \item \label{cond:mating-apprx-scale}
    $g(t x, t y, z) = t g(x, y, z)$ for all $t \geq 0$, $x, y \in \mathbb{R}_{\geq 0}$, and $z \in \mathbb{R}$;
    \item \label{cond:mating-apprx-apprx}
    there are a measurable function 
    $
        \rho(z): \mathbb{R} \rightarrow \mathbb{R}_{> 0}
    $
    and a constant $0 < \alpha < 1$ such that 
    \[
        \abs*{ L\left( x, y, z \right) - g\left( x, y, z \right) } \leq \rho(z) \left( x + y \right)^{\alpha},\ x, y \in \mathbb{N}_0,\ z \in \mathbb{R}.
    \]
\end{enumerate}
Similar requirements on $L(x, y, z)$ were introduced in~\cite{Shklyaev2023}, where large deviations of BBPRE were studied.

Let $\varphi_n = F_{n, 1}$ and $\mu_n = M_{n, 1}$ be the offspring sizes of the first pair.
Denote by $\ln^+ x = \max(0, \ln x)$, $x > 0$.
Now, we introduce several technical conditions.
\begin{enumerate}[resume*]
    \item \label{cond:bbpre-technical-step}
    There is a constant $\beta > 1$ such that 
    $
        \ERB \varphi^{\beta}_1, \ERB \mu^{\beta}_1
    $
    are finite a.s.
\end{enumerate}

\begin{remark}
    Without loss of generality, we assume $\alpha$ from~\ref{cond:mating-apprx-apprx} to satisfy the inequality $\alpha^{-1} < \beta$.
    So, we will use the notation $\delta = \alpha^{-1} - 1$, where $1 < 1 + \delta < \beta$.
\end{remark}

\begin{enumerate}[resume*]
    \item \label{cond:bbpre-technical-resid}
    For all $n \geq 1$
    $
        \ER \abs*{ \xi_n }^{1 + \beta} < \infty
    $
    and
    $
        \ER \abs*{ \zeta_n }^{1 + \beta} < \infty,
    $
    where 
    \begin{align*}
        \xi_n &= \ln g \big( \ERB \varphi_n, \ERB \mu_n, \eta_n \big), \
        \zeta_n = \ln^{+} \big( \omega^1_n + \omega^2_n + \omega^3_n \big), \\
        \omega^1_n &= \lambda^{1 + \delta}(\eta_n) + \rho^{1 + \delta}(\eta_n), \
        \omega^2_n = \ERB \varphi_n + \ERB \mu_n, \\
        \omega^3_n &= \ERB \abs*{ \varphi_n - \ERB \varphi_n }^{1 + \delta} + \ERB \abs*{ \mu_n - \ERB \mu_n}^{1 + \delta}.
    \end{align*}
\end{enumerate}

\begin{remark}
    Note that, due to Minkowski inequality and non-negativity of the terms included in the definition of $\omega_n^1$, $\omega_n^2$ and $\omega_n^3$,
    $
        \ER \abs*{ \zeta_n }^{1 + \beta}
    $
    is finite if the moments
    \begin{align*}
        \ER \left( \ln^{+} \lambda(\eta_n) \right)^{1 + \beta}, \
        \ER \left( \ln^{+} \rho(\eta_n) \right)^{1 + \beta}, \
        &\ER \left( \ln^{+} \ERB \varphi_n \right)^{1 + \beta}, \
        \ER \left( \ln^{+} \ERB \mu_n \right)^{1 + \beta}, \\
        \ER \left( \ln^{+} \ERB \abs*{ \varphi_n - \ERB \varphi_n }^{1 + \delta} \right)^{1 + \beta}, \
        &\ER \left( \ln^{+} \ERB \abs*{ \mu_n - \ERB \mu_n }^{1 + \delta} \right)^{1 + \beta}
    \end{align*}
    are finite.
\end{remark}

Under these conditions, we introduce the notation $\sigma^2 = \DR[ \xi_n ]$ and consider the random walk $S_n = \sum_{i = 1}^{n} \xi_n$, $S_0 = 0$, associated with the BBPRE.
We require its increments to satisfy the condition
\begin{enumerate}[resume*]
    \item \label{cond:bbpre-criticality}
    the process $N_n$ is \emph{critical}, i.e., $\ER \xi_n = 0$.
\end{enumerate}
Now, we are ready to introduce the extinction moment $\tau$ of a BBPRE
\[
    \tau = \min\{ n \in \mathbb{N} \ \left| \ N_n = 0 \right. \},\ N_0 = N.
\]

An asymptotic behavior of $\tau$ for a bisexual branching process with large initial number of pairs but without environment was provided in~\cite{Abdiushev2024} for the \emph{subcritical} case.
The \emph{critical} case of a Galton-Watson branching process was studied in~\cite{Lamperti1967}, while the case with the environment was recently considered in~\cite{Afanasev2024}.

In this work we study asymptotic behaviour of $\tau$ for a \emph{critical} BBPRE starting with large initial number of pairs $N_0 = N$ (Theorem~\ref{theorem:main}).
In Section~\ref{sec:auxiliary} we provide auxiliary lemmas used in the proof of Theorem~\ref{theorem:main}, while Section~\ref{sec:main} contains the proof itself.

\section{Auxiliary results}
\label{sec:auxiliary}

\begin{lemma}[Exercise 1.6.3 of~\cite{Durrett2019}]
    Let $\xi$ be a r.v. with $\ER \xi^{\alpha} < \infty$, where $\alpha \geq 1$.
    Then,
    \[
        n^{\alpha}\, \PR[ \xi > n ] \rightarrow 0,
    \]
    as  $n \rightarrow \infty$.
    \label{lemma:proba-asymptotics}
\end{lemma}

\begin{lemma}[Minkowski inequality]
    For any $x_1, \ldots, x_n \geq 0$ and $\alpha \geq 1$, the inequality
    \[
        \left(\sum_{i = 1}^n x_i \right)^{\alpha} \leq n^{\alpha - 1} \sum_{i = 1}^n x_i^{\alpha}
    \]
    holds.
    \label{lemma:jen-ineq}
\end{lemma}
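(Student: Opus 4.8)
The plan is to obtain this from the convexity of the map $t \mapsto t^{\alpha}$ on $[0, \infty)$, which holds precisely because $\alpha \geq 1$: its second derivative $\alpha(\alpha - 1)\, t^{\alpha - 2}$ is non-negative there. Applying Jensen's inequality to this convex function with the uniform weights $1/n$ on the points $x_1, \ldots, x_n$ gives
\[
    \left( \frac{1}{n} \sum_{i = 1}^n x_i \right)^{\alpha} \leq \frac{1}{n} \sum_{i = 1}^n x_i^{\alpha},
\]
and multiplying both sides by $n^{\alpha}$ yields the claim. So the whole statement is one line of convexity plus a rescaling.

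If one prefers to avoid quoting Jensen, I would instead apply H\"older's inequality to the vectors $(x_1, \ldots, x_n)$ and $(1, \ldots, 1)$ with conjugate exponents $\alpha$ and $\alpha/(\alpha - 1)$ (the case $\alpha = 1$ being the trivial identity): this gives $\sum_{i} x_i \leq \bigl( \sum_{i} x_i^{\alpha} \bigr)^{1/\alpha} n^{(\alpha - 1)/\alpha}$, and raising to the power $\alpha$ again gives the inequality. A third, fully elementary route is induction on $n$, using the two-term bound $(a + b)^{\alpha} \leq 2^{\alpha - 1}(a^{\alpha} + b^{\alpha})$ as the step — and that two-term bound is itself just the midpoint convexity of $t \mapsto t^{\alpha}$.

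There is no genuine obstacle here; the only things to be careful about are the degenerate cases (all $x_i = 0$, or $\alpha = 1$, where the inequality is an equality) and the \emph{direction}: the inequality is stated so that $\bigl( \sum_i x_i \bigr)^{\alpha}$ is bounded \emph{above} by $n^{\alpha - 1} \sum_i x_i^{\alpha}$, which is exactly the orientation needed later for pushing $\alpha$-th powers through the finitely many summands that appear in the moment estimates for the BBPRE.
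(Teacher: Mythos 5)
Your proof is correct, and since the paper states this lemma without proof (citing it only by name), your one-line Jensen argument is exactly the intended justification — indeed the paper's own label \texttt{lemma:jen-ineq} confirms the authors view it as an instance of Jensen's inequality for the convex map $t \mapsto t^{\alpha}$. The alternative routes via H\"older and induction are also fine but unnecessary.
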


\begin{lemma}[Marcinkiewicz-Zygmund inequality,~\cite{Marcinkiewicz1938}]
    Let $\{ \xi_n \}_{n = 1}^{\infty}$ be a sequence of i.i.d. r.v. with $\ER \abs*{\xi_n}^{\alpha} < \infty$, where $1 \leq \alpha \leq 2$, and $\ER \xi_n = 0$ for all $n \in \mathbb{N}$.
    Then, there is a constant $c_{\alpha} > 0$ such that
    \[
        \left( \ER \abs*{ \sum_{i = 1}^n \xi_i } \right)^{\alpha}
        \leq \ER \abs*{ \sum_{i = 1}^n \xi_i }^{\alpha}
        \leq c_{\alpha} \sum_{i = 1}^n \ER \abs*{\xi_i}^{\alpha}.
    \]
    \label{lemma:zygmund-marcinkiewicz}
\end{lemma}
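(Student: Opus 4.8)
The plan is to treat the two inequalities separately. The left one, $\left( \ER \abs*{ \sum_{i = 1}^n \xi_i } \right)^{\alpha} \leq \ER \abs*{ \sum_{i = 1}^n \xi_i }^{\alpha}$, is nothing but Jensen's inequality for the convex function $t \mapsto t^{\alpha}$ (recall $\alpha \geq 1$) applied to the non-negative random variable $\abs*{ \sum_{i = 1}^n \xi_i }$; it uses neither centering nor independence, and I would dispose of it in one line. All the content lies in the right-hand, genuine Marcinkiewicz--Zygmund bound, which I would prove by symmetrization, Khintchine's inequality, and a concavity step.

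First I would symmetrize. Let $\{ \xi_i' \}_{i \geq 1}$ be an independent copy of $\{ \xi_i \}_{i \geq 1}$ and set $\xi_i^{s} = \xi_i - \xi_i'$. Conditioning on $\sigma(\xi_1, \ldots, \xi_n)$, using $\ER \xi_i' = 0$ and the conditional Jensen inequality gives $\ER \abs*{ \sum_{i = 1}^n \xi_i }^{\alpha} \leq \ER \abs*{ \sum_{i = 1}^n \xi_i^{s} }^{\alpha}$. Since the $\xi_i^{s}$ are symmetric and independent, the vector $(\xi_1^{s}, \ldots, \xi_n^{s})$ has the same law as $(\varepsilon_1 \xi_1^{s}, \ldots, \varepsilon_n \xi_n^{s})$, where $\varepsilon_1, \ldots, \varepsilon_n$ are i.i.d. Rademacher signs independent of $(\xi_i^{s})$.

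Next I would condition on $(\xi_i^{s})$ and apply Khintchine's inequality to the Rademacher sum: there is a constant $B_{\alpha} < \infty$ with $\ER_{\varepsilon} \abs*{ \sum_{i = 1}^n \varepsilon_i \xi_i^{s} }^{\alpha} \leq B_{\alpha} \left( \sum_{i = 1}^n (\xi_i^{s})^{2} \right)^{\alpha / 2}$. Because $0 < \alpha / 2 \leq 1$, the map $t \mapsto t^{\alpha / 2}$ is subadditive on $[0, \infty)$, hence $\left( \sum_{i = 1}^n (\xi_i^{s})^{2} \right)^{\alpha / 2} \leq \sum_{i = 1}^n \abs*{ \xi_i^{s} }^{\alpha}$. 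Taking expectations and chaining the two estimates yields $\ER \abs*{ \sum_{i = 1}^n \xi_i }^{\alpha} \leq B_{\alpha} \sum_{i = 1}^n \ER \abs*{ \xi_i^{s} }^{\alpha}$. Finally, Lemma~\ref{lemma:jen-ineq} with $n = 2$ and the triangle inequality give $\abs*{ \xi_i^{s} }^{\alpha} \leq 2^{\alpha - 1} ( \abs*{ \xi_i }^{\alpha} + \abs*{ \xi_i' }^{\alpha} )$, so $\ER \abs*{ \xi_i^{s} }^{\alpha} \leq 2^{\alpha} \ER \abs*{ \xi_i }^{\alpha}$ (using that $\xi_i'$ has the same law as $\xi_i$), and the claim follows with $c_{\alpha} = 2^{\alpha} B_{\alpha}$.

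The main obstacle is Khintchine's inequality itself, the only genuinely analytic ingredient; I would either cite it or derive it from an estimate on the exponential moments of $\sum_i \varepsilon_i a_i$, everything around it being Jensen, symmetrization, and subadditivity of $t \mapsto t^{\alpha / 2}$. It is worth stressing why the naive route fails: bounding $\ER \abs*{ \sum_{i = 1}^n \xi_i }^{\alpha}$ by $\left( \ER \abs*{ \sum_{i = 1}^n \xi_i }^{2} \right)^{\alpha / 2} = \left( \sum_{i = 1}^n \ER \xi_i^{2} \right)^{\alpha / 2}$ works only when $\alpha = 2$, since for $\alpha < 2$ the second moments $\ER \xi_i^{2}$ may be infinite under the sole hypothesis $\ER \abs*{ \xi_i }^{\alpha} < \infty$; bridging exactly this gap is what the symmetrization step accomplishes.
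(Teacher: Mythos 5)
Your argument is correct. Note, however, that the paper does not prove this lemma at all: it is stated as a classical result and cited directly from the literature, so there is no in-paper proof to compare against. Your symmetrization route is the standard derivation and each step checks out: the left inequality is Jensen for $t \mapsto t^{\alpha}$; the symmetrization step $\ER \abs{ \sum_i \xi_i }^{\alpha} \leq \ER \abs{ \sum_i \xi_i^{s} }^{\alpha}$ is a correct application of conditional Jensen using $\ER \xi_i' = 0$; the sign-randomization, the subadditivity of $t \mapsto t^{\alpha/2}$ on $[0,\infty)$, and the final bound $\ER \abs{ \xi_i^{s} }^{\alpha} \leq 2^{\alpha} \ER \abs{ \xi_i }^{\alpha}$ via Lemma~\ref{lemma:jen-ineq} are all sound, giving $c_{\alpha} = 2^{\alpha} B_{\alpha}$. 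One simplification worth noting: in the regime $1 \leq \alpha \leq 2$ that the lemma actually asserts, the Khintchine step is not a genuinely analytic ingredient, since conditionally on $(\xi_i^{s})$ one has $\ER_{\varepsilon} \abs{ \sum_i \varepsilon_i \xi_i^{s} }^{\alpha} \leq \left( \ER_{\varepsilon} \abs{ \sum_i \varepsilon_i \xi_i^{s} }^{2} \right)^{\alpha/2} = \left( \sum_i (\xi_i^{s})^{2} \right)^{\alpha/2}$ by Lyapunov's (Jensen's) inequality, so you may take $B_{\alpha} = 1$ and avoid invoking Khintchine altogether; the exponential-moment machinery is only needed for $\alpha > 2$.
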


\section{Main results}
\label{sec:main}

\begin{theorem}
    Let $N_n$ be a critical BBPRE with $N_0 = N$ pairs satisfying Conditions~\ref{cond:mating-superadditive}~--~\ref{cond:bbpre-criticality}.
    Then, the extinction moment 
    $
        \tau = \min\{ n \in \mathbb{N} \ \left| \ N_n = 0 \right. \}
    $
    satisfies the relation
    \[
        \frac{ \tau }{ \ln^2 N } \xrightarrow[N \rightarrow \infty]{\mathrm{D}} \chi,
    \]
    where $\chi$ is a r.v. with the density
    \[
        p_{\chi}(t) = \frac{1}{\sqrt{2 \pi \sigma^2 t^3}} \exp\left( -\frac{1}{2 \sigma^2 t} \right) I\{ t \geq 0 \}.
    \]
    \label{theorem:main}
\end{theorem}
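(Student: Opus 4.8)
The proof would follow the heuristic that, while $N_n$ is large, the homogeneity~\ref{cond:mating-apprx-scale} of $g$ together with a law of large numbers for the offspring sums forces $N_n\approx N\exp(S_n)$, where $S_n=\sum_{i=1}^n\xi_i$ is the centered i.i.d.\ walk associated with the process; hence the process should become extinct roughly when $S_n$ first reaches $-\ln N$, and this first‑passage time, rescaled by $\ln^2 N$, converges to the first passage of a Brownian motion, whose density is exactly $p_\chi$. Accordingly, the plan is to compare $\tau$ with the hitting times $T_N^{(\theta)}=\min\{n:S_n\le-\theta\ln N\}$ of the associated walk. As a first ingredient — and assuming $\sigma^2\in(0,\infty)$, which is forced by~\ref{cond:bbpre-technical-resid} and makes $p_\chi$ meaningful — note that $\xi_n$ is a measurable function of $\eta_n$ alone, so the $\xi_n$ are i.i.d., centered by~\ref{cond:bbpre-criticality} and square‑integrable by~\ref{cond:bbpre-technical-resid}; by Donsker's invariance principle $S_{\lfloor t\ln^2 N\rfloor}/(\sigma\ln N)\Rightarrow B_t$ in $C[0,\infty)$, and since the first‑passage functional $x(\cdot)\mapsto\inf\{t:x(t)\le-\theta/\sigma\}$ is continuous at a.e.\ Brownian path, $T_N^{(\theta)}/\ln^2 N\Rightarrow\chi_\theta:=\inf\{t:B_t\le-\theta/\sigma\}$, where $\chi_1=\chi$ has density $p_\chi$ (a Lévy/inverse‑Gaussian law) and $\chi_\theta\to\chi$ a.s.\ as $\theta\to1$.

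Next I would establish the multiplicative approximation quantitatively. Fix $\gamma\in(2/(1+\beta),1)$ — possible precisely because $\beta>1$ — put $b_N=(\ln N)^\gamma$ (with a sufficiently large constant) and $\sigma_N=\min\{n:N_n<e^{b_N}\}$. Conditioning on $\eta_n$ and $N_{n-1}=k$: by the Marcinkiewicz–Zygmund inequality (Lemma~\ref{lemma:zygmund-marcinkiewicz}; applicable since w.l.o.g.\ $\delta<1$) the sums $\sum_{j\le k}F_{n,j}$, $\sum_{j\le k}M_{n,j}$ deviate from their conditional means $k\,\mathbf{E}_{\bfeta}\varphi_n$, $k\,\mathbf{E}_{\bfeta}\mu_n$ by $O(k^{1/(1+\delta)})$ in $L^{1+\delta}$; then~\ref{cond:mating-apprx-apprx} replaces $L$ by $g$ up to a term of order $k^\alpha$, the Lipschitz bound~\ref{cond:mating-apprx-lipsh} linearizes $g$ at $(k\mathbf{E}_{\bfeta}\varphi_n,k\mathbf{E}_{\bfeta}\mu_n)$, and the homogeneity~\ref{cond:mating-apprx-scale} gives $g(k\mathbf{E}_{\bfeta}\varphi_n,k\mathbf{E}_{\bfeta}\mu_n,\eta_n)=ke^{\xi_n}$. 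Collecting terms yields $N_n=k\,e^{\xi_n}(1+\varepsilon_n)$ with $\mathbf{E}_{\bfeta}|\varepsilon_n|^{1+\delta}\le C\,R_n^{1+\delta}\,k^{-\delta}$, where $R_n$ is assembled out of $\lambda(\eta_n),\rho(\eta_n),\mathbf{E}_{\bfeta}\varphi_n,\mathbf{E}_{\bfeta}\mu_n$, the centered $(1+\delta)$‑moments of the offspring and $e^{-\xi_n}$, in such a way that $\ln^+R_n\lesssim|\xi_n|+\zeta_n+O(1)$, hence has a finite $(1+\beta)$‑th moment by~\ref{cond:bbpre-technical-resid}. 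Using Markov's inequality on these conditional moments, with probability tending to one $|\varepsilon_n|\le\tfrac12$ for all relevant $n$, so on $\{n\le\sigma_N\wedge K\ln^2 N\}$ one gets $|\ln N_n-\ln N-S_n|\le E_n$ with $E_n\le e^{-c_0b_N}\sum_{i\le K\ln^2 N}R_i$, $c_0=\delta/(1+\delta)$; and since Lemma~\ref{lemma:proba-asymptotics} with a union bound gives $\max_{i\le K\ln^2 N}\ln^+R_i\le(\ln N)^{\gamma'}$ for any $\gamma'\in(2/(1+\beta),\gamma)$ with probability $\to1$, a large enough constant in $b_N$ makes $\sup_{n\le\sigma_N\wedge K\ln^2 N}E_n=o_P(\ln N)$.

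Then I would couple $\tau$ with $T_N^{(1)}$. Let $\mathcal E_N$ be the (asymptotically sure) event that the above supremum is at most $\epsilon'\ln N$. On $\mathcal E_N$ a bootstrap shows $N_n\ge e^{b_N}$ for all $n< T_N^{(1-\epsilon)}\wedge(K\ln^2 N+1)$: had $N$ first dropped below $e^{b_N}$ at some such step $n_1<T_N^{(1-\epsilon)}$, the still‑valid error estimate would give $\ln N_{n_1}\ge(\epsilon-\epsilon')\ln N\gg b_N$ for large $N$ (choosing $\epsilon'<\epsilon$), a contradiction; hence $\tau\ge T_N^{(1-\epsilon)}\wedge(K\ln^2 N+1)$, and letting $N\to\infty$ then $\epsilon\to0$, $K\to\infty$ gives $\limsup_N\mathbf{P}(\tau\le x\ln^2 N)\le\mathbf{P}(\chi\le x)$. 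Conversely, on $\mathcal E_N$ either $\tau\le T_N^{(1+\epsilon)}\wedge K\ln^2 N$ outright — at such a step $\ln N_n\le-\epsilon\ln N+o(\ln N)\to-\infty$, forcing the nonnegative integer $N_n$ to vanish — or the process has fewer than $e^{b_N}$ pairs by time $T_N^{(1+\epsilon)}\wedge K\ln^2 N$, in which case the extinction estimate below adds only $o_P(\ln^2 N)$ further steps; either way $\limsup_N\mathbf{P}(\tau>x\ln^2 N)\le\mathbf{P}(\chi>x)$. Combining the two inequalities gives $\tau/\ln^2 N\Rightarrow\chi$. (Superadditivity~\ref{cond:mating-superadditive} enters through the monotonicity of $N_n$ in the initial number of pairs and its domination from below by a sum of independent single‑pair processes, used for comparison and for the a.s.\ finiteness of $\tau$.)

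It remains to produce a deterministic $h(k)=O\big((\ln^+k)^2+1\big)$ with $\sup_k\mathbf{P}(\tau_k>u\,h(k))\to0$ as $u\to\infty$, where $\tau_k$ is the extinction time from $k$ pairs; applied with $k<e^{b_N}$ and $\gamma<1$ this turns the relevant $\tau_k$ into the $o_P(\ln^2 N)$ quantity used above. I would obtain this by iterating the descent of the previous two paragraphs at scale $k$: one round reduces $k$ pairs to fewer than $e^{(\ln k)^\gamma}$ pairs in $O_P((\ln k)^2)$ steps, so after $j$ rounds the logarithm of the number of pairs is $(\ln k)^{\gamma^j}$, the successive time costs $(\ln k)^{2\gamma^j}$ form a series dominated by its first term, and after $O(\ln\ln k)$ rounds only a bounded number of pairs remains, from which $\tau$ is a.s.\ finite by criticality (cf.\ the non‑degeneracy results for BBPRE) and hence negligible. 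The genuine difficulty, I expect, is the approximation step — propagating a multiplicative estimate through $\Theta(\ln^2 N)$ generations when only the \emph{logarithms} of the environmental quantities in~\ref{cond:bbpre-technical-resid} are integrable; this is what forces the descent to stop at the comparatively high level $e^{(\ln N)^\gamma}$ and obliges one to absorb the rare huge values of $R_n$ and of the offspring fluctuations, with $\beta>1$ serving exactly to leave room for the exponent $\gamma<1$. The recursive extinction estimate is the second point requiring care.
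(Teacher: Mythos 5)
Your overall architecture --- linearize one generation via~\ref{cond:mating-apprx-lipsh}--\ref{cond:mating-apprx-apprx} and Marcinkiewicz--Zygmund to get $N_n\approx N e^{S_n}$ with an $L^{1+\delta}$ error controlled by environmental quantities whose logarithms have $(1+\beta)$ moments, then read off $\tau$ from the first passage of $S_n$ below $-\ln N$ plus a cushion $(\ln N)^{\gamma}$ with $\gamma$ tied to $2/(1+\beta)$, and finish with Donsker --- is exactly the paper's. One structural difference: the paper stops at the hitting time $\theta$ of the \emph{walk} (a function of the environment alone), not of the process, which lets it apply Donsker to $\theta$ directly and then only needs the two one-sided statements $\mathbf{P}(N_{\theta}>0)\to 1$ and $\mathbf{P}(N_{\theta+\lfloor\varepsilon\ln^2N\rfloor}>0)\to 0$; your process-defined $\sigma_N$ forces the extra bootstrap/transfer step. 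Also, the decisive use of superadditivity~\ref{cond:mating-superadditive} in the paper is not the lower domination you cite, but the uniform per-step mean bound $\ERB[N_n][N_{n-1}=m]\le m\,e^{\xi_n}$ (Lemma~\ref{lemma:bbpre-expectation}, proved by a doubling argument combined with the asymptotics $\ERB[N_n][N_{n-1}=m]/m\to e^{\xi_n}$); this bound is what makes both the error propagation and the endgame work.

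The genuine gap is your treatment of the endgame, i.e.\ extinction after the population has dropped to about $e^{(\ln N)^{\gamma}}$ pairs. Your multiscale recursion bottoms out at ``a bounded number of pairs, from which $\tau$ is a.s.\ finite by criticality (cf.\ non-degeneracy results for BBPRE)''. That last step is not a consequence of Conditions~\ref{cond:mating-superadditive}--\ref{cond:bbpre-criticality}: criticality here is a statement about $\ER\xi_n=0$, not about $\mathbf{P}(N_1=0\,|\,N_0=1)$ being positive, and the paper assumes no such non-degeneracy condition; moreover the uniformity in the starting environment needed to iterate your descent across $O(\ln\ln N)$ scales is left unaddressed. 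The paper avoids all of this with a one-shot first-moment argument: conditioning on $N_\theta=\lfloor 2Ne^{S_\theta}\rfloor$ (which holds up to a vanishing probability by the $L^{1+\delta}$ error bound), iterating Lemma~\ref{lemma:bbpre-expectation} gives $\PRB[N_{\theta+k}\ge 1]\le\min_{i\le k}\ERB[N_{\theta+i}]\le c\exp\bigl(\ln^{\gamma}N+\min_{i\le k}(S_{\theta+i}-S_\theta)\bigr)$, and by Donsker the minimum of the walk over the next $k=\lfloor\varepsilon\ln^2N\rfloor$ steps is of order $-\sqrt{\varepsilon}\,\ln N$, which swamps $\ln^{\gamma}N$ since $\gamma<1$; note that $L(0,0,z)=0$ follows from superadditivity, so $0$ is absorbing and the monotonicity $\PRB[N_{\theta+k}\ge1]\le\PRB[N_{\theta+i}\ge1]$ is legitimate. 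You should replace your recursive extinction estimate by this argument (or supply the missing uniform non-degeneracy statement, which would amount to an additional hypothesis).
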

\begin{proof}
Consider the random walk $S_n = \sum_{i = 1}^n \xi_i$ with $S_0 = 0$.
This walk is used to represent the branching process $N_n$ by the following recurrent sequence:
\[
    N_{n} = N_k e^{S_{n} - S_{k}} + \sum_{i = k + 1}^n R_{i} e^{S_{n} - S_{i}},\ 0 \leq k < n,
\]
where $R_i = N_{i} - N_{i - 1} e^{\xi_i}$.

Clearly, the increment $S_n - S_i$ is independent of $R_i$ for all $i \leq n$.
Now, we are ready to estimate $R_n$ using the following lemma.
Note that all subsequent statements will be proved under the assumptions of Theorem~\ref{theorem:main}.
\begin{lemma}
    There is a constant $c > 0$ such that
    \[
        \ERB \abs*{ R_n }^{1 + \delta} \leq c \, e^{\zeta_n} \ERB N_{n - 1},\ n \geq 1.
    \]
    \label{lemma:step-bound}
\end{lemma}
\begin{proof}
    Let us use the notation
    \[
        F_n = \sum_{i = 1}^{N_{n - 1}} F_{n, i},\ 
        M_n = \sum_{i = 1}^{N_{n - 1}} M_{n, i}.
    \]
    As seen, $R_n$ can be represented as
    \begin{align*}
        R_n &= L\left( F_n, M_n, \eta_n \right) - g\left( F_n, M_n, \eta_n \right) \\
        &+\ g\left( F_n, M_n, \eta_n \right) - N_{n - 1} g\left( \ERB \varphi_n, \ERB \mu_n, \eta_n \right).
    \end{align*}
    Using~\ref{cond:mating-apprx-apprx}, the first difference can be estimated as
    \begin{align*}
        \ERB \abs*{ L\left( F_n, M_n, \eta_n \right) - g\left( F_n, M_n, \eta_n \right) }^{1 + \delta}
        &\leq \rho^{1 + \delta}(\eta_n) \, \ERB \left( F_n + M_n \right)^{\alpha (1 + \delta)} \\
        &\leq \rho^{1 + \delta}(\eta_n) \, \ERB N_{n - 1} \left( \ERB \varphi_n + \ERB \mu_n \right).
    \end{align*}
    Using~\ref{cond:mating-apprx-scale} and~\ref{cond:mating-apprx-lipsh}, we bound the second difference as
    \begin{align*}
        &\ERB \abs*{ g\left( F_n, M_n, \eta_n \right) - N_{n - 1} g\left( \ERB \varphi_n, \ERB \mu_n, \eta_n \right) }^{1 + \delta} \\
        &\qquad \qquad \qquad \qquad
        \leq \lambda^{1 + \delta}(\eta_n) \, \ERB \left( \abs*{ F_n - N_{n - 1} \ERB \varphi_n } + \abs*{ M_n - N_{n - 1} \ERB \mu_n } \right)^{1 + \delta},
    \end{align*}
    Applying Lemma~\ref{lemma:jen-ineq}, we bound the last expression by
    \[
        \lambda^{1 + \delta}(\eta_n) \, 2^{\delta} \left( \ERB \abs*{ F_n - N_{n - 1} \ERB \varphi_n }^{1 + \delta} + \ERB \abs*{ M_n - N_{n - 1} \ERB \mu_n }^{1 + \delta} \right),
    \]
    which, by Lemma~\ref{lemma:zygmund-marcinkiewicz}, is less than
    \[
        \lambda^{1 + \delta}(\eta_n) \, 2^{\delta} c_{1 + \delta}\, \ERB N_{n - 1} \left( \ERB \abs*{ \varphi_n - \ERB \varphi_n }^{1 + \delta} + \ERB \abs*{ \mu_n - \ERB \mu_n }^{1 + \delta} \right).
    \]

    Thus, applying Lemma~\ref{lemma:jen-ineq}, we bound $\ERB \abs*{ R_n }^{1 + \delta}$ as
    \[
        \ERB \abs*{ R_n }^{1 + \delta} \leq c \, e^{\zeta_n} \ERB N_{n - 1},\ n \geq 1,
    \]
    where $c = 2^{\delta} \left( 1 + 2^{\delta} c_{1 + \delta} \right)$.
\end{proof}

\begin{lemma}
    The inequality 
    \[
        \ERB N_n \leq e^{\xi_n} \ERB N_{n - 1}
    \]
    holds for all $n \geq 1$.
    \label{lemma:bbpre-expectation}
\end{lemma}
\begin{proof}
    Let us fix $\bfeta$ and consider the sequence 
    \[
        b^n_m = \frac{ \ERB[ N_n ][ N_{n - 1} = m ] }{ m },\ m \geq 1.
    \]
    We will show that $b^n_m \leq e^{\xi_n}$ for all $m \geq 1$.
    
    Let us assume the opposite, i.e. there is $m' \geq 1$ such that $b^n_{m'} > e^{\xi_n}$.
    Similarly to the previous lemma, we will use the notation
    \[
        F_n^m = \sum_{i = 1}^{m} F_{n, i}, \ 
        M_n^m = \sum_{i = 1}^{m} M_{n, i}.
    \]
    Then, using~\ref{cond:mating-superadditive}, we get
    \begin{align*}
        \ER[ N_n ][  N_{n - 1} = 2m, \bfeta ]
        &= \ERB L\left( F_n^{2m}, M_n^{2m}, \eta_{n} \right) \\
        &\geq \ERB L\left( F_n^m, M_n^m, \eta_{n} \right) \cdot 2,\ m \geq 1,
    \end{align*}
    and, thus, $b^n_{2^k m'} > e^{\xi_n}$ for all $k \geq 1$, and $\limsup\limits_{m \rightarrow \infty} b^n_m > e^{\xi_n}$.

    At the same time, we will show that $\limsup\limits_{m \rightarrow \infty} b^n_m \leq e^{\xi_n}$, which will lead us to a contradiction.
    Indeed, 
    \[
        b^n_m 
        \leq \frac{ \ERB \abs*{ L\left( F_n^m, M_n^m, \eta_n \right) - g\left( F_n^m, M_n^m, \eta_n \right) } }{ m }
        + \frac{ \ERB g\left( F_n^m, M_n^m, \eta_n \right) }{ m }.
    \]
    Using~\ref{cond:mating-apprx-apprx} and Jensen's inequality, the first term can be bounded  by
    \[
        \rho(\eta_n) \frac{ \ERB^{\alpha} \left( F_n^m + M_n^m \right) }{ m }
        = \rho(\eta_n) \frac{ \ERB^{\alpha} \left( \varphi_n + \mu_n \right) }{ m^{1 - \alpha} }.
    \]
    Now, using~\ref{cond:mating-apprx-scale}, the second term can be replaced by
    \[
        \frac{ \ERB g\left( F_n^m, M_n^m, \eta_n \right) - g\left( m \ERB \varphi_n, m \ERB \mu_n, \eta_n \right) }{ m }
        + g\left( \ERB \varphi_n, \ERB \mu_n, \eta_n \right).
    \]
    Using~\ref{cond:mating-apprx-lipsh} and Lemma~\ref{lemma:zygmund-marcinkiewicz}, we bound the first part by
    \begin{align*}
        &\lambda(\eta_n) \frac{ \ERB \abs*{ F_n^m - m \ERB \varphi_n } + \ERB \abs*{ M_n^m - m \ERB \mu_n } }{ m } \\
        &\qquad \qquad \qquad \qquad
        \leq \frac{ \lambda(\eta_n) c_{1 + \delta}^{\alpha} }{ m^{1 - \alpha} } \left( \ERB^{\alpha} \abs*{ \varphi_n - \ERB \varphi_n }^{1 + \delta} + \ERB^{\alpha} \abs*{ \mu_n - \ERB \mu_n }^{1 + \delta} \right).
    \end{align*}
    Finally, combining the previous bounds, we get
    \[
        b^n_m = g\left(\ERB \varphi_n , \ERB \mu_n, \eta_n \right) + O\left( m^{\alpha - 1} \right) = e^{\xi_n} + o(1), \ m \rightarrow \infty,
    \]
    which leads us to the contradiction.
\end{proof}

Now, we estimate the accumulated error of approximation of $N_n$ by $N e^{S_n}$ using the following lemma.
\begin{lemma}
    There is a constant $c > 0$ such that, for all $n \geq 1$,
    \[
        \ERB \abs{ N_n - N e^{S_n} }^{1 + \delta}
        \leq c\, n^{\delta} N e^{S_n} \sum_{i = 1}^n e^{ \zeta_i - \xi_i + \delta \left( S_n - S_i \right) }.
    \]
    \label{lemma:residual-bound}
\end{lemma}
\begin{proof}
    As seen, the difference $N_n - N e^{S_n}$ can be rewritten as
    \[
        N_n - N e^{S_n} = \sum_{i = 1}^n R_i e^{S_n - S_i}.
    \]
    So, using Lemmas~\ref{lemma:jen-ineq},~\ref{lemma:step-bound} and~\ref{lemma:bbpre-expectation}, we consequently get
    \begin{align*}
        \ERB \abs{ N_{n} - N e^{S_n} }^{1 + \delta}
        &\leq n^{\delta} \sum_{i = 1}^n \ERB[ \abs{ R_i }^{1 + \delta} e^{ \left( 1 + \delta \right) \left( S_n - S_i \right) } ] \\
        &\leq n^{\delta} \sum_{i = 1}^n e^{ \left( 1 + \delta \right) \left( S_n - S_i \right) } \, \ERB \abs{ R_i }^{1 + \delta} \\
        &\leq c\, n^{\delta} \sum_{i = 1}^n e^{ \left( 1 + \delta \right) \left( S_n - S_i \right) } e^{\zeta_i} \, \ERB[ N_{i - 1} ] \\
        &\leq c\, n^{\delta} N e^{S_n} \sum_{i = 1}^n e^{\zeta_i - \xi_i + \delta \left( S_n - S_i \right) }.
    \end{align*}
\end{proof}

Now, we are ready to connect the extinction moment of the process $N_n$ with the hitting time of the corresponding random walk.
Let
\[
    \theta = \min\left\{ n \geq 1 \, \left| \ S_n \leq \ln^{\gamma} N - \ln N \right. \right\},\ \gamma = \frac{ 2 }{ 1 + \beta } < 1.
\]
Firstly, we will write a proof of the well-known fact that $\theta$ is of the order $\ln^2 N$.
Secondly, we will show that $\theta$ is close to $\tau$.

\begin{lemma}
    As $N \rightarrow \infty$,
    \[
        \frac{ \theta }{ \ln^2 N } \xrightarrow[]{\mathrm{D}} \chi,
    \]
    where $\chi$ is a random variable with the density
    \[
        p_{\chi}(t) = \frac{1}{\sqrt{2 \pi \sigma^2 t^3}} \exp\left( -\frac{1}{2 \sigma^2 t} \right) I\{ t > 0 \}.
    \]
    \label{lemma:rw-descent-asymptotics}
\end{lemma}
\begin{proof}
    By the definition of $\theta$, one can state that 
    \[
        \PR[ \frac{ \theta }{ \ln^2 N } \leq x ]
        = \PR[ \frac{ \min_{i \leq x \ln^2 N} S_i }{ \sigma \sqrt{x \ln^2 N} } \leq \frac{ \ln^{\gamma} N - \ln N }{ \sigma \sqrt{x \ln^2 N} } ],
    \]
    where $x > 0$.
    By Donsker's invariance principle, we have
    \[
        \frac{ \min_{i \leq n} S_i }{ \sigma \sqrt{n} } \xrightarrow[n \rightarrow \infty]{\mathrm{D}} \inf_{t \in [0, 1]} W_t,
    \]
    where $W_t$ is a standard Brownian motion.
    At the same time,
    \[
        \frac{ \ln^{\gamma} N - \ln N }{ \sigma \sqrt{x \ln^2 N} } \xrightarrow[N \rightarrow \infty]{} \frac{-1}{\sigma \sqrt{x}}.
    \]
    Thus, by Slutsky's theorem, we obtain
    \[
        \PR[ \frac{\theta}{\ln^2 N}  \leq x ] \xrightarrow[N \rightarrow \infty]{} \PR[ \inf_{t \in [0, 1]} W_t \leq \frac{-1}{\sigma \sqrt{x}} ],
    \]
    for $x > 0$, which is equivalent to the statement of the lemma.
\end{proof}

Now, we will show that $\theta$ approximates $\tau$.
Namely, we will prove that $N_n$ extincts soon after the moment $\theta$.
\begin{lemma}
    For any $\varepsilon > 0$ and $k = \lfloor \varepsilon \ln^2 N \rfloor$,
    we have 
    \[
        \PR[ N_{\theta + k} > 0 ] \rightarrow 0,\ N \rightarrow \infty.
    \]
    \label{lemma:extinction}
\end{lemma}
\begin{proof}
    By the law of total probability, we have
    \begin{align}
        \PRB[ N_{\theta + k} > 0 ]
        &\leq \PRB[ N_{\theta + k} \geq 1 ][ N_{\theta} > 2 N e^{S_{\theta}} ]
        \PRB[ N_{\theta} > 2 N e^{S_{\theta}} ] \nonumber \\
        &+\, \PRB[ N_{\theta + k} \geq 1 ][ N_{\theta} \leq 2 N e^{S_{\theta}} ]
        \PRB[ N_{\theta} \leq 2 N e^{S_{\theta}} ] \label{eq:extinction-total} \\
        &\leq \PRB[ N_{\theta} > 2 N e^{S_{\theta}} ]
        + \PRB[ N_{\theta + k} \geq 1 ][ N_{\theta} = \lfloor 2 N e^{S_{\theta}} \rfloor ]. \nonumber
    \end{align}
    
    Note that $S_{\theta} - S_i \leq 0$ for all $i \leq \theta$, due to the definition of $\theta$. 
    Thus, using Markov's inequality, Lemma~\ref{lemma:residual-bound}, and this fact, consequently, we bound the first term of the right side of~\eqref{eq:extinction-total} in the following way:
    \begin{align}
        \begin{split}
        \PRB[ N_{\theta} > 2 N e^{ S_{\theta} } ]
        &\leq \PRB[ \abs*{ N_{\theta} - N e^{S_{\theta}} }^{1 + \delta} > N^{1 + \delta} e^{ \left( 1 + \delta \right) S_{\theta} } ] \\
        &\leq \frac{ 1 }{ N^{1 + \delta} e^{ \left( 1 + \delta \right) S_{\theta}} } \ERB \abs*{ N_{\theta} - N e^{S_{\theta}} }^{ 1 + \delta } \\
        &\leq \frac{ c\, \theta^{1 + \delta} }{ N^{\delta} e^{ \delta S_{\theta} } } \frac{ \sum_{i = 1}^{\theta} e^{\zeta_i - \xi_i} }{ \theta }.
        \end{split}
        \label{eq:extinction-first}
    \end{align}
    By the definition of $\theta$, 
    \begin{equation}
        e^{ \ln^{\gamma} N + \xi_{\theta} } \leq N e^{S_{\theta}} \leq e^{ \ln^{\gamma} N }.
        \label{eq:theta-definition}
    \end{equation}
    Thus, the right side of~\eqref{eq:extinction-first} can be bounded by
    \[
        c\, \theta^{1 + \delta} \max_{i \leq \theta} \exp\left[ \zeta_i - \xi_i - \delta \left( \ln^{\gamma} N + \xi_{\theta} \right) \right].
    \]

    Further, we will use the notation $\delta' = \delta / 3 > 0$.  
    Let us prove that 
    \begin{equation}
        \PR[ \max_{i \leq \theta} e^{\zeta_i - \xi_i} > e^{ \delta' \ln^{\gamma} N } ] \rightarrow 0,\ N \rightarrow \infty.
        \label{eq:extinction-max}
    \end{equation}
    Indeed, let us fix $x > 0$. 
    By the law of total probability, we obtain
    \begin{align*}
        \PR[ \max_{i \leq \theta} e^{\zeta_i - \xi_i} > e^{ \delta' \ln^{\gamma} N } ]
        &\leq \PR[ \max_{i \leq x \ln^2 N} e^{\zeta_i - \xi_i} > e^{ \delta' \ln^{\gamma} N } ] \\
        &+ \PR \left( \vphantom{ \max_{i \leq x \ln^2 N} e^{\zeta_i - \xi_i} } \theta > x \ln^2 N \right).
    \end{align*}
    Due to Lemma~\ref{lemma:rw-descent-asymptotics}, 
    \begin{equation}
        \PR[ \theta > x \ln^2 N ] \rightarrow 1 - F_{\chi}(x),\ N \rightarrow \infty,
        \label{eq:extinction-conv}
    \end{equation}
    where $F_{\chi}(x)$ is the cumulative distribution function of $\chi$. 
    Since $\zeta_i - \xi_i$, $i \geq 1$ are i.i.d. r.v.,
    \[
        \PR[ \max_{i \leq x \ln^2 N} e^{ \zeta_i - \xi_i } > e^{ \delta' \ln^{\gamma} N } ]
        \leq 1 - \PR[ \vphantom{ \max_{i \leq x \ln^2 N} e^{ \zeta_i - \xi_i } > e^{ \delta' \ln^{\gamma} N } } \zeta_1 - \xi_1 \leq \delta' \ln^{\gamma} N ]^{x \ln^2 N},
    \]
    where the last expression is less than
    \[
        1 - \exp\left( x \ln^2 N\ \PR[ \zeta_1 - \xi_1 > \delta' \ln^{\gamma} N ] \right),
    \]
    by the inequality $\ln(1 - p) \geq -p$, where $0 \leq p \leq 1$.
    Since $\gamma = 2 / (1 + \beta)$, $\ER \abs*{ \zeta_1 - \xi_1 }^{2 / \gamma}$ is finite, and, by Lemma~\ref{lemma:proba-asymptotics}, 
    \[
        \ln^2 N\ \PR[ \zeta_1 - \xi_1 > \delta' \ln^{\gamma} N ] \rightarrow 0,\ N \rightarrow \infty.
    \]
    Thus,
    \[
        \limsup_{N \rightarrow \infty} \PR[ \max_{i \leq \theta} e^{\zeta_i - \xi_i} > e^{ \delta' \ln^{\gamma} N } ]  \leq 1 - F_{\chi}(x),
    \]
    and, taking $x > 0$ large enough, we state~\eqref{eq:extinction-max}.

    Similarly, for any $x > 0$, one can write
    \begin{align*}
        \PR[ e^{ -\delta \xi_{\theta} } > e^{ \delta' \ln^{\gamma} N } ]
        &\leq \PR[ 3 \xi_{\theta} \leq -\ln^{\gamma} N,\, \theta \leq x \ln^2 N ] \\
        &+ \PR[ \theta > x \ln^2 N ].
    \end{align*}
    The first term can be bounded by
    \[
        \sum_{i = 1}^{x \ln^2 N} \PR[ 3 \xi_i \leq -\ln^{\gamma} N ] \leq x \ln^2 N\ \PR[ 3 \xi_1 \leq -\ln^{\gamma} N ].
    \]
    Applying Lemma~\ref{lemma:proba-asymptotics} to the last expression and using~\eqref{eq:extinction-conv}, we get
    \[
        \limsup_{N \rightarrow \infty} \PR[ e^{ -3 \xi_{\theta} } > e^{ \ln^{\gamma} N } ] \leq 1 - F_{\chi}(x),
    \]
    and, taking $x > 0$ large enough, we prove that
    \begin{equation}
        \PR[ e^{ -\delta \xi_{\theta} } > e^{ \delta' \ln^{\gamma} N } ] \rightarrow 0,\ N \rightarrow \infty.
        \label{eq:extinction-shift}
    \end{equation}

    Since $\theta$ is of the order $\ln^2 N$ by Lemma~\ref{lemma:rw-descent-asymptotics}, for any $x > 0$, we obtain
    \begin{equation}
        \PR[ \theta^{1 + \delta} > x \, e^{ \delta' \ln^{\gamma} N } ] \rightarrow 0,\ N \rightarrow \infty,
        \label{eq:extinction-theta}
    \end{equation}
    by Slutsky's lemma.
    Thus, we have
    \begin{align*}
        &\qquad \qquad \quad \quad \ \ \ \, \, \PR[ \theta^{1 + \delta} \max_{i \leq \theta} \exp\left[ \zeta_i - \xi_i - \delta \left( \ln^{\gamma} N + \xi_{\theta} \right) \right] > x ] \\
        &\leq \PR[ \vphantom{ \max_{i \leq \theta} e^{\zeta_i - \xi_i} > e^{ \delta' \ln^{\gamma} N } } e^{ -\delta \xi_{\theta} } > e^{ \delta' \ln^{\gamma} N } ]
        + \PR[ \max_{i \leq \theta} e^{\zeta_i - \xi_i} > e^{ \delta' \ln^{\gamma} N } ]
        + \PR[ \vphantom{ \max_{i \leq \theta} e^{\zeta_i - \xi_i} > e^{ \delta' \ln^{\gamma} N } } \theta^{1 + \delta} > x \, e^{ \delta' \ln^{\gamma} N } ],
    \end{align*}
    which, in combination with~\eqref{eq:extinction-max},~\eqref{eq:extinction-shift}, and~\eqref{eq:extinction-theta}, proves the convergence
    \begin{equation}
        \PRB[ N_{\theta} > 2 N e^{ S_{\theta} } ] \xrightarrow[]{\mathrm{D}} 0,\ N \rightarrow \infty.
        \label{eq:extinction-first-conv}
    \end{equation}
    
    Now, we consider the second term of the right side of~\eqref{eq:extinction-total}.
    Using Markov's inequality, Lemma~\ref{lemma:bbpre-expectation}, and~\eqref{eq:theta-definition}, consequently, we obtain
    \begin{align}
        \PRB[ N_{\theta + k} \geq 1 ][ N_{\theta} = \lfloor 2 N e^{S_{\theta}} \rfloor ]
        &\leq \min_{i \leq k} \ERB[ N_{\theta + i} ][ N_{\theta} = \lfloor 2 N e^{S_{\theta}} \rfloor ] \nonumber \\
        &\leq c\, N e^{S_{\theta}} \exp\left( \min_{i \leq k} \left( S_{\theta + i} - S_{\theta} \right) \right) \label{eq:extinction-second} \\
        &\leq c\, \exp\left( \ln^{\gamma} N + \min_{i \leq k} \left( S_{\theta + i} - S_{\theta} \right) \right). \nonumber
    \end{align}

    Now, we will prove that
    \begin{equation}
        \exp\left( \ln^{\gamma} N + \min_{i \leq k} \left( S_{\theta + i} - S_{\theta} \right) \right) \xrightarrow[]{\mathrm{D}} 0,\ N \rightarrow \infty.
        \label{eq:extinction-upper-conv}
    \end{equation}
    Indeed, by Donsker's invariance principle, for $x < 0$, we get
    \[
        \PR[ \min_{i \leq k} \left( S_{\theta + i} - S_{\theta} \right) > x \sigma \sqrt{k} ] \rightarrow 1 - F(x),\ N \rightarrow \infty,
    \]
    where $F(x)$ is the cumulative distribution function of $\inf_{t \in [0, 1]} W_t$.
    But, for any $x, t < 0$, there is $N'$ such that $x \sigma \sqrt{k} \leq - \ln^{\gamma} N + t$ for all $N \geq N'$.
    Therefore,
    \[
        \limsup_{N \rightarrow \infty} \PR[ \ln^{\gamma} N + \min_{i \leq k} \left( S_{\theta + i} - S_{\theta} \right) > t ] \leq 1 - \sup_{x < 0} F(x) = 0.
    \]

    Thus, we stated~\eqref{eq:extinction-upper-conv}, which, in combination with~\eqref{eq:extinction-second}, proves the convergence
    \begin{equation}
        \PRB[ N_{\theta + k} \geq 1 ][ N_{\theta} = \lfloor 2 N e^{ S_{\theta} } \rfloor ] \xrightarrow[]{\mathrm{D}} 0,\ N \rightarrow \infty.
        \label{eq:extinction-second-conv}
    \end{equation}
    Thus, summing up~\eqref{eq:extinction-first-conv} and~\eqref{eq:extinction-second-conv}, and applying Slutsky's lemma, we get
    \[
        \PRB[ N_{\theta + k} > 0 ] \xrightarrow[]{\mathrm{D}} 0,\ N \rightarrow \infty.
    \]

    Therefore, by the definition of weak convergence, we have
    \[
        \PR[ N_{\theta + k} > 0] = \ER \left( \PRB[ N_{\theta + k} > 0 ] \right) \rightarrow 0,\ N \rightarrow \infty,
    \] 
\end{proof}

Similarly, we prove that $N_n$ is still non-zero at the moment $\theta$.
\begin{lemma}
    The convergence
    \[
        \PR[ N_{\theta} > 0 ] \rightarrow 1,\ N \rightarrow \infty,
    \]
    is valid.
    \label{lemma:existance}
\end{lemma}
\begin{proof}
    Let us consider the probability of the opposite event.
    Using Markov's inequality and Lemma~\ref{lemma:residual-bound}, we get
    \begin{align*}
        \PRB[ N_{\theta} = 0 ]
        &= \PRB[ N_{\theta} - N e^{ S_{\theta} } \leq -N e^{ S_{\theta} } ] \\
        &\leq \PRB[ \abs*{ N_{\theta} - N e^{S_{\theta}} }^{1 + \delta} \geq N^{1 + \delta} e^{ \left( 1 + \delta \right) S_{\theta} } ].
    \end{align*}

    The last expression appears in~\eqref{eq:extinction-first}.
    As seen from the proof of Lemma~\ref{lemma:extinction}, it converges to zero in distribution.
    By the same reasons as before, we have the required convergence.
\end{proof}

Finally, we are ready to complete the proof of Theorem~\ref{theorem:main}.
On the one hand, for a fixed $x \in \mathbb{R}$ we obtain
\begin{align*}
    \PR[ \tau \leq x \ln^2 N]
    &\leq \PR[ \left\{ \theta \leq x \ln^2 N \right\} \cup \left\{ \tau \leq \theta \right\} ] \\
    &\leq \PR[ \theta \leq x \ln^2 N ] + \PR[ N_{\theta} = 0 ].
\end{align*}
But, by Lemma~\ref{lemma:existance}, $\PR[ N_{\theta} = 0 ]$ converges to zero as $N \rightarrow \infty$.

On the other hand, for any $\varepsilon > 0$, we get
\begin{align*}
    \PR[ \tau \leq x \ln^2 N ]
    &\geq \PR[ \left\{ \theta \leq (x - \varepsilon) \ln^2 N \right\} \setminus \left\{ \tau \geq \theta + \varepsilon \ln^2 N \right\} ] \\
    &\geq \PR[ \theta \leq (x - \varepsilon) \ln^2 N ] - \PR[ N_{\theta + k} > 0 ],
\end{align*}
where $k = \lfloor \varepsilon \ln^2 N \rfloor$.
Again, due to Lemma~\ref{lemma:extinction}, $\PR[ N_{\theta + k} > 0 ]$ converges to zero as $N \rightarrow \infty$.

Thus, taking $N \rightarrow \infty$, and then, $\varepsilon \rightarrow 0$, we obtain
\[
    \lim_{ N \rightarrow \infty } \PR[ \frac{\theta}{\ln^2 N} \leq x ]
    \leq \lim_{ N \rightarrow \infty } \PR[ \frac{ \tau }{ \ln^2 N } \leq x ]
    \leq \lim_{ N \rightarrow \infty } \PR[ \frac{\theta}{\ln^2 N} \leq x ],
\]
which proves Theorem~\ref{theorem:main}.
\end{proof}


\end{document}